\renewcommand{\epsilon}{\varepsilon}
\newtheorem*{namedtheorem}{\theoremname}
\newcommand{\theoremname}{testing}
\newtheorem{theorem}{Theorem}[section]
\newtheorem{corollary}[theorem]{Corollary}
\theoremstyle{definition}
\theoremstyle{remark}
\newcommand{\GL}{\operatorname{GL}}
\newcommand{\cohom}[3]{H^{{\raise1pt\hbox{$\scriptstyle#1$}}}(#2\>\!,#3)}
\newcommand{\tatecohom}[3]%
  {\widehat H^{{\raise1pt\hbox{$\scriptstyle#1$}}}(#2\>\!,#3)}
\newcommand{\Cohom}[3]%
  {H^{{\raise1pt\hbox{$\scriptstyle#1$}}}\big(#2\>\!,#3\big)}
\newcommand{\Tatecohom}[3]%
  {\widehat H^{{\raise1pt\hbox{$\scriptstyle#1$}}}\big(#2\>\!,#3\big)}
\newcommand{\homol}[3]{H_{{\lower1pt\hbox{$\scriptstyle#1$}}}(#2\>\!,#3)}
\newcommand{\homolog}[2]{H_{{\lower1pt\hbox{$\scriptstyle#1$}}}(#2)}
\DeclareMathOperator{\Out}{Out}
\DeclareMathOperator{\Mod}{Mod}
\DeclareMathOperator{\Aut}{Aut}
\newcommand{\lk}{\operatorname{lk}}
\newcommand{\st}{\operatorname{st}}
\begin{document}

\title{A note on nilpotent subgroups of automorphism groups of RAAGs}

\author{Javier Aramayona \& Anthony Genevois}

\begin{abstract}
We observe that automorphism groups of right-angled Artin groups contain nilpotent non-abelian subgroups, namely $H_3(\mathbb{Z})$ the three-dimensional integer Heisenberg group, provided they admit a certain type of element, called an adjacent transvection. This represents a (minor) extension of  a result of Charney-Vogtmann. 
\end{abstract}

\maketitle

\section{Introduction}

Automorphism groups of right-angled Artin groups (RAAGs) are normally studied through their comparison with linear groups and automorphism groups of free groups (and thus, by extension, with mapping class groups). One fact that distinguishes the linear group $\GL(n,\mathbb{Z})$ from the automorphism group $\Aut(\mathbb{F}_n)$ of the free group $\mathbb{F}_n$, and from  the mapping class group  $\Mod(S)$,  is that every solvable subgroup of the latter two is virtually abelian; see \cite{bestvina,emina} and \cite{BLM}, respectively. In sharp contrast, arbitrary automorphism groups of RAAGs may contain a copy of $\GL(n,\mathbb{Z})$ with $n\ge 3$. Even when this is not the case, Charney-Vogtmann \cite{CV} proved that (outer) automorphism groups of RAAGs contain torsion-free nilpotent and non-abelian  subgroups, whenever they contain at least two {\em adjacent transvections}; see below for a definition. Further examples  may be found in \cite{Day?}. 

The purpose of this short note is to observe that the presence of a single adjacent transvection suffices for the (full) automorphism group of a centerless RAAG to contain a nilpotent non-abelian subgroup. We remark that Charney-Vogtmann showed \cite{CV}, if there are no adjacent transvections then every solvable subgroup is virtually abelian.

Before we state our result, denote by $\le$ the usual partial order in the vertex set $V(\Gamma)$ of $\Gamma$; see Section \ref{sec:defs}. Also, let  $H_3(\mathbb Z)$ be the three-dimensional integer Heisenberg group, which has presentation $$H_3(\mathbb Z) = \langle A,B,C \mid  [A,C] = [B,C] = 1, [A, B] = C \rangle. $$  With this notation, we will observe:

\begin{theorem} Let $\Gamma$ be a simplicial graph. If there are adjacent vertices $a,b \in V(\Gamma)$ with $a \le b$, such that $b$ is not adjacent to all the vertices of $\Gamma$, then $H_3(\mathbb Z)$ is a subgroup of $\Aut(A_\Gamma)$. 
\label{thm:heisenberg-aut}
\end{theorem}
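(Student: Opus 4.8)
The plan is to exhibit three explicit automorphisms of $A_\Gamma$ generating a copy of $H_3(\Z)$. Write $\iota_g$ for the inner automorphism $x \mapsto gxg^{-1}$ of $A_\Gamma$. Since $a$ and $b$ are adjacent with $a \le b$, the rule $a \mapsto ab$, and $v \mapsto v$ for every other vertex $v$, defines an automorphism $\tau$ of $A_\Gamma$ (well defined precisely because $a \le b$); this is the adjacent transvection associated with $a \le b$. I would set
\[
\alpha = \tau, \qquad \beta = \iota_a, \qquad \gamma = \iota_b,
\]
and claim that $A \mapsto \alpha$, $B \mapsto \beta$, $C \mapsto \gamma$ defines an injective homomorphism $\psi\colon H_3(\Z) \to \Aut(A_\Gamma)$, whose image is then the desired subgroup.

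To see that $\psi$ is a homomorphism, I would check the three defining relations of $H_3(\Z)$, using the identity $\varphi\,\iota_g\,\varphi^{-1} = \iota_{\varphi(g)}$ (valid for any $\varphi \in \Aut(A_\Gamma)$) together with the relation $[a,b] = 1$ in $A_\Gamma$. Since $\tau(b) = b$, we get $[\alpha,\gamma] = \iota_{\tau(b)}\iota_b^{-1} = \mathrm{id}$; since $a$ and $b$ commute, $\iota_a$ and $\iota_b$ commute, so $[\beta,\gamma] = \mathrm{id}$; and $[\alpha,\beta] = \iota_{\tau(a)}\iota_a^{-1} = \iota_{ab}\iota_a^{-1} = \iota_{aba^{-1}} = \iota_b = \gamma$.

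For injectivity I would argue as follows. The kernel of $\psi$ is a normal subgroup of the nilpotent group $H_3(\Z)$, hence, if it were nontrivial, it would meet the centre $Z(H_3(\Z)) = \langle C\rangle$ nontrivially; so it is enough to show that $\psi(C^k) = \gamma^k$ is the identity only when $k = 0$. Now $\gamma^k = \iota_{b^k}$, so $\gamma^k = \mathrm{id}$ means exactly that $b^k$ lies in the centre of $A_\Gamma$. By hypothesis there is a vertex $c \neq b$ not adjacent to $b$, and since $b$ and $c$ span no edge the assignment $b \mapsto b$, $c \mapsto c$, and all other vertices $\mapsto 1$ extends to a well-defined homomorphism $A_\Gamma \to \langle b, c\rangle$ onto the free group of rank two. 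It sends $b^k$ to $b^k$, which commutes with the other free generator $c$ only if $k = 0$; hence $k = 0$, and $\psi$ is injective.

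I expect the only real content to be the construction itself — a single transvection together with the inner automorphisms by $a$ and by $b$ — and, within the injectivity step, the observation that the hypothesis on $b$ (that $b$ is adjacent to a proper subset of $V(\Gamma)$) is exactly what forces $\gamma = \iota_b$ to have infinite order. One could also prove injectivity more computationally, by writing an element of $H_3(\Z)$ in its normal form $A^iB^jC^k$, computing the automorphism $\alpha^i\beta^j\gamma^k$ of $A_\Gamma$ explicitly, and separating the three exponents by applying the retraction above together with the abelianization-type retraction $A_\Gamma \to \Z$ sending $b \mapsto 1$ and the other vertices to $0$; this route also needs the remark that under the hypotheses $a$, like $b$, cannot be adjacent to every vertex of $\Gamma$.
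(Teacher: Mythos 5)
Your construction is exactly the paper's: the adjacent transvection $t_{ab}$ together with the inner automorphisms by $a$ and $b$ (the paper sends $A\mapsto c_a$, $B\mapsto t_{ab}$, $C\mapsto c_b$; your swap of the roles of $A$ and $B$ is immaterial). Where you genuinely diverge is in the verification. For the relations, you use the identity $\varphi\,\iota_g\,\varphi^{-1}=\iota_{\varphi(g)}$, which packages the paper's generator-by-generator computations into three one-line checks; this is cosmetic but clean. For injectivity, the paper writes a general element in the normal form $A^mB^nC^p$, evaluates the corresponding automorphism at the vertex $a$ to force $n=0$, and then invokes the description of the center of $A_\Gamma$ as generated by the vertices adjacent to everything, using that neither $a$ nor $b$ is such a vertex. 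You instead use the structural fact that a nontrivial normal subgroup of a nilpotent group meets the center, reducing everything to showing that $\iota_{b^k}=\mathrm{id}$ forces $k=0$, which you get from the explicit retraction of $A_\Gamma$ onto the free group $\langle b,c\rangle$ for a vertex $c$ not adjacent to $b$. This buys two things: you avoid quoting the center theorem for RAAGs (your retraction argument is self-contained), and your argument uses only the hypothesis as literally stated (non-centrality of $b$), whereas the paper's proof also uses non-centrality of $a$ -- which, as you correctly note, does follow from $a\le b$ together with the hypothesis on $b$, but the paper does not remark on this. The facts you take as known (that a nontrivial normal subgroup of a nilpotent group meets the center, and that $Z(H_3(\mathbb{Z}))=\langle C\rangle$) are standard, so there is no gap; your proof is correct.
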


As a consequence, for such $\Gamma$ the group $\Aut(A_\Gamma)$ does not embed in $\Mod(S)$ or $\Aut(\mathbb{F}_n)$. A further immediate application, surely well-known to experts, is that no finite-index subgroup of such $\Aut(A_\Gamma)$ may act nicely on non-positively curved space:

\begin{corollary}
If $\Gamma$ is as in Theorem \ref{thm:heisenberg-aut} , then no finite-index subgroup of $\Aut(A_\Gamma)$ can act properly by semi-simple isometries on a ${\rm CAT}(0)$ space. 
\label{cor:cat}
\end{corollary}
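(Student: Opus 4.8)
\emph{Proof plan for Corollary~\ref{cor:cat}.}
I would deduce this from Theorem~\ref{thm:heisenberg-aut} by running the classical argument showing that the integer Heisenberg group admits no proper action by semisimple isometries on a $\catzero$ space, after first reducing to (a finite-index subgroup of) that group. So suppose, for a contradiction, that $G \le \Aut(A_\Gamma)$ has finite index and acts properly by semisimple isometries on a $\catzero$ space $X$. By Theorem~\ref{thm:heisenberg-aut}, $\Aut(A_\Gamma)$ contains a copy of $H := H_3(\mathbb Z)$, so $N := G \cap H$ has finite index in $H$, and the $G$-action restricts to a proper action of $N$ on $X$ in which every element is still semisimple --- semisimplicity being a property of the individual isometry, not of the ambient group. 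Since $N$ has finite index in $H$, its image in $H^{\mathrm{ab}} \cong \mathbb Z^2$ has finite index; picking $a,b \in N$ mapping to a pair of generators of that image and using the commutator formula in $H$, one gets $[a,b] = C^m$ for some integer $m \ge 1$. Thus $c := C^m$ has infinite order, lies in $[N,N]$ by construction, and is central in $N$ because $C$ is central in $H$.

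Next I would study the action of $c$. Properness forces $c$ to be hyperbolic: an elliptic element fixes a point $x$, and then all (infinitely many) powers of $c$ move the ball $\bar B(x,1)$ into itself, contradicting properness. Hence the min-set $\mathrm{Min}(c) \subseteq X$ is nonempty, and it carries a canonical splitting $\mathrm{Min}(c) \cong Y \times \mathbb R$ along which $c$ acts as a nontrivial translation on the $\mathbb R$-factor. Since $c$ is central in $N$, every element of $N$ commutes with $c$, hence preserves $\mathrm{Min}(c)$ and its canonical splitting, and so acts on $Y \times \mathbb R$ as a product of an isometry of $Y$ with an isometry of $\mathbb R$; the latter must be a translation, since a reflection would conjugate $c$ to $c^{-1}$. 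Recording the translation lengths gives a homomorphism $\tau \colon N \to (\mathbb R, +)$ with $\tau(c) \ne 0$. But $\mathbb R$ is abelian, so $\tau$ vanishes on $[N,N]$, which contains $c$; this contradiction proves the corollary.

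The delicate point is the assertion that an isometry commuting with the hyperbolic isometry $c$ preserves the product decomposition of $\mathrm{Min}(c)$ and acts by a product --- this is exactly the mechanism behind the Flat Torus Theorem (as in Bridson--Haefliger), and it is where the hypothesis ``semisimple'' is indispensable, since it is what makes $\mathrm{Min}(c)$ nonempty. Apart from that, the proof is just bookkeeping with the lower central series of $H_3(\mathbb Z)$ and with the definition of a proper action.
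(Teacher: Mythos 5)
Your proof is correct, but it follows a different route from the one written in the paper. The paper stays inside $\Aut(A_\Gamma)$: given a finite-index subgroup $G$, it picks $m$ with $c_a^m,c_b^m,t^m\in G$, uses the explicit relation $t^nc_a^mt^{-n}=c_a^mc_b^{mn}$, applies the Flat Torus Theorem to the rank-two abelian group $\langle c_a^m,c_b^m\rangle$ to get a flat plane on which these act as lattice translations, and derives the contradiction from conjugacy-invariance of translation length (the lengths $\|c_a^mc_b^{mn}\|$ would have to be constant in $n$ yet tend to infinity). You instead use Theorem \ref{thm:heisenberg-aut} purely as a black box: you pass to $N=G\cap H_3(\mathbb Z)$, produce an infinite-order element $c=[a,b]$ that is central in $N$ and lies in $[N,N]$ (your commutator computation $[a,b]=C^{\det}$ with nonzero determinant is fine, up to replacing $c$ by $c^{-1}$), rule out ellipticity by properness, and then run the standard centralizer argument on $\mathrm{Min}(c)\cong Y\times\mathbb R$ (Bridson--Haefliger II.6.8): the translation-length homomorphism $\tau\colon N\to(\mathbb R,+)$ must kill $[N,N]\ni c$ while $\tau(c)\neq 0$. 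This is exactly the ``$H_3(\mathbb Z)$ (and its finite-index subgroups) admits no proper semisimple action'' argument that the paper only sketches in the remark following its proof (with a pointer to Fujiwara--Shioya--Yamagata for an alternative); your version has the advantage of not re-using the explicit automorphisms $c_a,c_b,t$ and of making the finite-index reduction purely group-theoretic, while the paper's version avoids any bookkeeping inside the Heisenberg group at the cost of invoking the specific relation in $\Aut(A_\Gamma)$. Both arguments rest on the same underlying structure theorem for semisimple isometries, and your delicate point (that the centralizer of a hyperbolic element preserves the splitting of its min-set and translates the $\mathbb R$-factor) is precisely part of that theorem, with your reflection argument being the correct justification.
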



In sharp contrast, $\Aut(A_\Gamma)$ and $\Out(A_\Gamma)$ are sometimes commensurable to a right-angled Artin group \cite{CF,Day?}, and therefore they act nicely on ${\rm CAT}(0)$ spaces.

\section{Definitions}
\label{sec:defs}

In order to make this note as concise and self-contained as possible, we only introduce the objects that we will need in the proof of our results. We refer the reader to the various papers in the bibliography below for a thorough introduction to automorphism groups of right-angled Artin groups.  

Let $\Gamma$ be a simplicial graph, and denote by $V(\Gamma)$ (resp. $E(\Gamma)$) its set of  vertices (resp. edges). The right-angled Artin group $A_\Gamma$ defined by $\Gamma$ is the group given by the presentation $$A_\Gamma= \left \langle v \in V(\Gamma) \mid [v,w] = 1 \iff vw \in E(\Gamma) \right \rangle.$$ 

Given a right-angled Artin group $A_\Gamma$, we consider its automorphism group $\Aut(A_\Gamma)$. This is a finitely presented group with an explicit generating set \cite{Laurence,Servatius} and an explicit (although in slightly different terms) presentation \cite{Day}. 

Here, we will need a specific type of element of $\Aut(A_\Gamma)$ and $\Out(A_\Gamma)$, called a {\em tranvection}. Given vertices $v, w \in V(\Gamma)$, the transvection $t_{vw}$ is the self-map of $A_\Gamma$ defined by  $$t_{vw}: v \mapsto vw$$ and $t_{vw}(u)=u$ for every $u \ne v$. An easy observation is that $t_{vw} \in \Aut(A_\Gamma)$ if and only if $\lk(v) \subset \st(w)$; here, $\lk(\cdot)$ denotes the link of a vertex in $\Gamma$, while $\st(\cdot)$ denotes its star, namely the link union the vertex. As usual in the literature, we will write $v \le w$ to mean $\lk(v) \subset \st(w)$, noting that the relation $\le$ is in fact a partial order.  Finally, we say that $t_{vw}$ is an {\em adjacent transvection} if $v$ and $w$ are adjacent in $\Gamma$.  

\section{A homomorphic image of the Heisenberg group} 

We now prove Theorem \ref{thm:heisenberg-aut}:

\begin{proof}[Proof of Theorem \ref{thm:heisenberg-aut}]
Let $a, b$ be adjacent vertices of $\Gamma$, with $a \le b$ and which are not adjacent to all the vertices of $\Gamma$. We write $c_a$ and $c_b$ for the automorphisms of $A_\Gamma$ given by  conjugation by $a$ and $b$, respectively. Finally, let $t= t_{ab}$ the transvection that sends $a$ to $ab$ and fixes the rest of generators. 

First, observe that $c_a$ and $c_b$ commute since $a$ and $b$ are adjacent. Next, we claim that $[c_b, t]=1$ also. Indeed, if $v \ne a$, $$tc_bt^{-1}( v) = tc_b(v) = t(bvb^{-1})= bvb^{-1} = c_b(v),$$ and 
$$tc_bt^{-1}(a) = tc_b(ab^{-1})= t(bab^{-2}) = babb^{-2} = c_b(a).$$ Finally, we claim that $tc_at^{-1} = c_bc_a$. Indeed, if $v\ne a$ then $$tc_at^{-1}(v) = tc_a(v)= t(ava^{-1})= ab v b^{-1} a^{-1} = c_bc_a(v),$$ while $$tc_at^{-1}(a) = tc_a(ab^{-1}) = t(a^2b^{-1}a^{-1}) = ababb^{-1}b^{-1}a^{-1} = c_bc_a(a),$$ as desired. 

In light of the above, and using notation for the presentation of $H_3(\mathbb Z)$ given in the introduction, the map $H_3(\mathbb Z) \to \Aut(A_\Gamma)$ given by $A \mapsto c_a$, $B\mapsto t$, and $C \mapsto c_b$ is a homomorphism. Now, we want to prove that this is a monomorphism.

From the presentation of $H_3(\mathbb{Z})$, it clearly follows that any of its elements can be written as $A^mB^nC^p$ for some $m,n,p \in \mathbb{Z}$. Furthermore, $A^mB^nC^p=1$ if and only if $m=n=p=0$. Indeed, we first deduce from the equality $A^mB^nC^p=1$ that $m=n=0$ by looking at the image of $A^mB^nC^p$ into the abelianization of $H_3(\mathbb{Z})$; therefore, our equality reduces to $C^p=1$, and finally $p=0$ follows from the torsion-freeness of $H_3(\mathbb{Z})$. Thus, in order to prove that our homomorphism $H_3(\mathbb Z) \to \Aut(\Gamma \mathcal{G})$ is injective, we only have to verify that, for every $m,n,p \in \mathbb{Z}$, $c_a^mt^nc_b^p=1$ implies $m=n=p=0$.

So let $m,n,p \in \mathbb{Z}$ and suppose that $c_a^mt^nc_b^p=1$. Noticing that $$ c_a^mt^nc_b^p(a)=c_a^mt^n(b^pab^{-p})= c_a^mt^n(a)=c_a^m(ab^n)=a^mab^na^{-m}=ab^n,$$ we first deduce that $n=0$. Therefore, $c_a^mc_b^p=1$. This precisely means that $a^mb^p$ belongs to the center of $A_\Gamma$. On the other, the center of $A_\Gamma$ corresponds exactly to the subgroup generated by the vertices which are adjacent to all the vertices of $\Gamma$. Because we supposed that neither $a$ nor $b$ is adjacent to all the vertices of $\Gamma$, it follows that $a^mb^p=1$. Therefore, we deduce that $a^m=b^p=1$, and finally that $m=p=0$ by torsion-freeness of $A_\Gamma$. 
\end{proof}

\section{Actions on ${\rm CAT}(0)$ spaces}

We prove Corollary \ref{cor:cat}: 

\begin{proof}
Let $\Gamma$ as in Theorem \ref{thm:heisenberg-aut}, so there exist adjacent vertices $a,b \in V(\Gamma)$ with $a \le b$ and which are not adjacent to all the vertices of $\Gamma$.
As above, write $c_a$ and $c_b$ for the conjugations on $a$ and $b$, respectively, and $t$ for the transvection $t_{ab}$. Notice that $c_a$ and $c_b$ are infinite-order automorphisms because $a$ and $b$ do not belong to the center of $A_\Gamma$. 

Consider a finite-index subgroup $G < \Aut(A_\Gamma)$; as such, there exists some $m \geq 1$ such that $c_a^m, c_b^m, t^m \in G$. It is immediate to check that, for all $n \in \mathbb{N}$, 
\begin{equation}
t^n c_a^m t^{-n} = c_a^{m} c_b^{mn}.
\label{eq}
\end{equation} Suppose now that $G$ acts properly by semi-simple isometries on some ${\rm CAT}(0)$ space $X$. As $c_a^m$ and $c_b^m$ commute, the Flat Torus Theorem \cite{BH} implies that $X$ contains an isometrically embedded copy of a Euclidean plane,  on which  $c_a^m$ and $c_b^m$ act by translations with quotient a $2$-torus. It follows that, as a transformation of this plane, the translation length of  $c_a^{m} c_b^{mn}$ must tend to infinity as $n$ grows; on the other hand, equation (\ref{eq}) implies that this translation length must be equal to that of $c_a^m$. This is a contradiction, and the result follows. 
\end{proof}

A possible interpretation of the previous proof is the following. As Theorem \ref{thm:heisenberg-aut} proves, $\Aut(\Gamma \mathcal{G})$ contains a copy of the three-dimensional integer Heisenberg group $$H_3(\mathbb{Z}) = \langle A,B,C \mid [A,C]=[B,C]=1, [A,B]=C \rangle.$$ It is not difficult to notice that, for any $k \geq 1$, the subgroup $\langle A^k,B^k,C^k \rangle \leq H_3(\mathbb{Z})$ is naturally isomorphic to $H_3(\mathbb{Z})$ itself, so that any finite-index subgroup of $\Aut(\Gamma \mathcal{G})$ has to contain a copy of $H_3(\mathbb{Z})$. Therefore, Corollary \ref{cor:cat} follows from the fact that $H_3(\mathbb{Z})$ does not act properly by simple-isometries on a ${\rm CAT}(0)$ space. This is essentially what we have shown in the previous proof. An alternative argument can be found in \cite[Corollary 5.1]{FSY}, where it is furthermore proved that, for any proper action of $H_3(\mathbb{Z})$ on a ${\rm CAT}(0)$ space, $C$ is necessarily parabolic. It is worth noticing that $H_3(\mathbb{Z})$ has a proper parabolic action on the complex hyperbolic plance $\mathbb{H}_{\mathbb{C}}^2$, which is a proper finite-dimensional ${\rm CAT}(-1)$ space \cite[Corollary 5.1]{FSY}.

\end{document}